\documentclass[11pt]{article}

\usepackage{array}
\usepackage{booktabs}
\usepackage{longtable}

\newcolumntype{L}[1]{>{\raggedright\arraybackslash}p{#1}}
\newcolumntype{C}[1]{>{\centering\arraybackslash}p{#1}}

\usepackage[a4paper,margin=2.8cm]{geometry}

\title{More (shifted) runners, less loneliness}
\author{%
  Daria Poliakova\\
  \small University of Hamburg\\
  \small\href{mailto:polydarya@gmail.com}{\texttt{polydarya@gmail.com}}
}
\date{}
\usepackage{booktabs}
\usepackage{amsmath,amssymb,amsthm}
\usepackage{mathtools}
\usepackage{graphicx}
\usepackage{enumitem}
\usepackage[hidelinks]{hyperref}

\usepackage{tikz}
\usetikzlibrary{arrows.meta,calc,positioning}

\setlist[itemize]{leftmargin=2em}
\setlist[enumerate]{leftmargin=2.2em}

\usepackage{accents}
\newlength{\dhatheight}

\newtheorem{theorem}{Theorem}
\newtheorem{proposition}[theorem]{Proposition}

\theoremstyle{definition}
\newtheorem{definition}[theorem]{Definition}

\newcommand{\R}{\mathbb R}
\newcommand{\Z}{\mathbb Z}
\newcommand{\norm}[1]{\lVert #1\rVert}

\begin{document}

\maketitle

\begin{abstract}
The shifted lonely runner conjecture was recently disproved by Blanco, Criado and Santos. We give quantitative bounds on its failure as the number of runners grows. The loneliness of a configuration is the maximum, over time, of the distance from the origin to the nearest runner. We write \(1/(n+1+E_n)\) for the infimum of loneliness over configurations of \(n\) runners on the unit circle with distinct positive integer velocities and arbitrary initial shifts.  We prove $E_n\ge\lfloor n/287\rfloor$, which together with the elementary bound \(E_n\leq n-1\) implies that \(E_n\) grows linearly in \(n\). We also show that \(E_n\geq1\) for every \(n\geq95\).
\end{abstract}

\section*{Introduction}

The ordinary lonely runner conjecture (LR), originating in work of Wills \cite{Wills1968} and Cusick \cite{Cusick1973}, states that when \(n+1\) runners start at the same place and time and move around a circle of length \(1\) with distinct constant velocities, each runner will at some time be lonely, i.e., at a distance at least \(1/(n+1)\) from every other runner. For the shifted version (SLR), we fix one runner at the origin and consider \(n\) other runners with distinct positive integer velocities. SLR then asserted that the stationary runner must still be lonely at some time when the others are allowed arbitrary starting positions \cite[Conjecture~10]{BeckHostenSchymura2019}. This change of setup makes a big difference: while LR remains open in general and has been shown to be true for up to fourteen runners \cite{BarajasSerra2007,MalikiosisSantosSchymura2024,Rosenfeld2025Eight,Trakulthongchai2025,Rosenfeld2025Nine,SungkawichaiTrakulthongchai2026,Allikvere2026}, SLR has been disproved by Blanco, Criado and Santos \cite{BlancoCriadoSantos2026}, with smallest counterexample having only five moving runners. Their counterexamples were found using the geometric reformulation in terms of covering radii of lattice zonotopes associated with the velocity vectors \cite{HenzeMalikiosis2017,BlancoCriadoSantos2026}.

Failure of SLR opens the door for new questions. The loneliness of a configuration is the greatest distance from the nearest moving runner that the stationary runner attains over time. Beck and Everett \cite[Section~4]{BeckEverett2026} ask for the infimum of this quantity over all velocity vectors and shifts in small dimensions. Zhang \cite[Question~9.4]{Zhang2026} asks whether SLR fails for every \(n\geq5\), and how the shifted minimizers are related to tight configurations for ordinary LR. Here, we ask the asymptotic question: how badly can SLR fail as the number of runners grows?

We write the infimum considered by Beck and Everett as \(1/(n+1+E_n)\), so that \(E_n\) measures the excess in the reciprocal of the worst possible loneliness. This note provides a partial answer by proving
\[
E_n\ge\lfloor n/287\rfloor.
\]
Together with the elementary upper bound \(E_n\leq n-1\), this shows that \(E_n\) grows linearly in \(n\). Thus we determine the order of growth, but not an asymptotic coefficient, which we believe to be closer to \(1\) if \(E_n/n\) converges.

The method of constructing SLR counterexamples in this note originates from the work of Goddyn and Wong \cite{GoddynWong2006} on ordinary LR. They changed selected velocities in the standard tight configuration \(1,\ldots,N\), while keeping the number of runners, the zero initial shifts, and the loneliness \(1/(N+1)\) unchanged. By such accelerations they obtained different tight configurations. Here, we perform replacements similar in spirit, but introducing shifts allows us to insert fewer runners than we delete. We start from velocities \(2,4,\ldots,2N\), all at shift zero, and replace nine runners with eight while ensuring that, at every time, some runner remains within \(1/(N+1)\) of the origin. Even one such replacement gives a counterexample to SLR, while performing linearly many compatible replacements gives the linear lower bound on \(E_n\). To deal with simultaneous compatible replacements, we introduce the paradigm of local uniform rules, and show that our eight-for-nine replacement is an example of such a rule.

In light of this work, the technology of accelerations  could have provided a route to disproving SLR, if one had been looking for a disproof rather than a proof. The geometric approach enabled Blanco, Criado and Santos to find counterexamples through an algorithmic search, even though they say they had in fact hoped to remove an assumption from an existing finite-checking theorem for SLR \cite[Section~1]{BlancoCriadoSantos2026}.

\paragraph{Acknowledgments and funding.}
I am grateful to Karim Adiprasito, Enis Kaya, Evrydiki Nestoridi, Stavros Papadakis, Vasiliki Petrotou and Christos Tatakis for organizing Combinatorics and Geometry in Mytilene, where I heard an inspiring talk on SLR, and to Paco Santos for giving that talk. I am grateful to Andrii Bondarenko for useful discussions. I am grateful to Matthias Beck and Matthias Schymura for their feedback on the first draft of this note. This work was funded by the Deutsche Forschungsgemeinschaft (DFG, German Research Foundation) -- SFB-Gesch\"aftszeichen 1624 -- Projektnummer 506632645.

\paragraph{AI disclosure.} The author believed that a linear lower bound on \(E_n\) should exist. When asked to find it, Astra produced a horrendous construction with constant \(1/15482879\) and an unreadable proof. A subsequent chain of human-machine interactions resulted in the simple argument explained in this note. The idea of uniform rules belongs to the author.

\section{The problem statement and the theorems}
We fix one runner at position $0$ with velocity $0$, and consider all possible configurations of $n$ more runners with distinct positive integer velocities $v_1,\ldots,v_n$ and initial shifts $s_1,\ldots,s_n\in\R/\Z$. For $x\in\R$, let $\norm{x}$ denote the distance to the closest integer. Define $E_n$ by
\[
\frac{1}{n+1+E_n}
=\inf_{v,s}\;\max_{t\in\R/\Z}\;\min_{1\le i\le n}\norm{s_i+v_it}.
\]

For velocities $1,\ldots,n$ with zero initial shifts, \( \max_{t\in\R/\Z}\;\min_{1\le i\le n}\norm{s_i+v_it} \) is exactly \(1/(n+1)\), so $E_n\ge0$. If the shifted lonely runner conjecture
(SLR) were true, then $E_n$ would be zero. It is therefore a natural measure of the failure of SLR, and it is interesting to know how \(E_n\) behaves when \(n\) grows. The upper bound on \(E_n \) is \(n-1\). Indeed, call a configuration a {\em \(\delta\)-cover} if for every $t$, some runner satisfies
$\norm{s_i+v_it}\le\delta$, i.e., at every time there is a runner not further than \(\delta\) from the stationary runner at \(0\). For every \(\delta > 1/(n+1+E_n)\), a \(\delta\)-cover exists, so \(E_n > n-1\) would imply a \(\delta\)-cover with \(\delta<1/(2n)\), which is not possible, since \(n\) runners can only cover a fraction \(2n\delta < 1\) of the time. 

The main result of this note is a positive \(n\)-linear lower bound on \(E_n\).

\begin{theorem}\label{thm:linear-growth}
For every $n \geq 1$,
\[
E_n\ge\lfloor n/287\rfloor.
\]
\end{theorem}

Our constructions actually imply a lower threshold after which SLR fails with a guarantee.

\begin{theorem}\label{thm:threshold}
For every \(n \geq 95\), \(E_n \geq 1\).
\end{theorem}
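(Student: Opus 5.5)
We need to show that for every \(n\ge 95\) there is a configuration of \(n\) runners with distinct positive integer velocities, arbitrary shifts, and loneliness at most \(1/(n+2)\). Equivalently, for every \(\delta>1/(n+2)\) we want a \(\delta\)-cover by \(n\) runners; it is enough to produce an actual \(1/(n+2)\)-cover. The plan follows the introduction: start from a tight configuration, perform one eight-for-nine replacement to save a runner, and then pad with extra runners to reach every \(n\ge 95\).

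\textbf{Base configuration.} Take \(N=n+1\) and the runners with velocities \(2,4,\ldots,2N\), all at shift zero. Rescaling time by \(2\) turns this into the tight configuration \(1,\ldots,N\), so these \(N\) runners form a \(1/(N+1)\)-cover. In fact it is a closed cover: for every \(t\), some \(\norm{2kt}\le 1/(N+1)\), by the Dirichlet-type argument applied to \(1,\ldots,N\) at time \(2t\).

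\textbf{The replacement.} Choose nine of these runners, with velocities \(2a_1,\ldots,2a_9\) in some fixed arithmetic pattern near the top of the range. Replace them by eight runners with suitable (possibly odd) velocities and shifts, for instance shifts \(1/2\), so that the new set still covers, within \(1/(N+1)\), every time \(t\) at which the nine removed runners were the only ones close to \(0\). The result has \(N-1=n\) runners and is a \(1/(n+2)\)-cover, which gives \(E_n\ge 1\).

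\textbf{Verification.} I would carry it out as follows.
\begin{enumerate}
\item Describe the set \(B\subset\R/\Z\) of times not covered by the surviving \(N-9\) runners \(2k\). These lie near rationals \(p/(2q)\) with controlled denominators: if \(\norm{2kt}>1/(N+1)\) for all remaining \(k\), then \(t\) sits in a small interval around specific fractions.
\item Check that the eight new runners cover \(B\). The shift by \(1/2\) is what allows a runner with velocity \(v\) to be near \(0\) exactly when \(vt\) is near \(1/2\), so one new runner can serve times that two old ones served. This is where eight can replace nine.
\end{enumerate}

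\textbf{Reaching every \(n\ge 95\).} The threshold \(95\) should be the smallest \(N\) for which the gaps near the relevant fractions are separated enough that the local analysis works. Presumably this is \(N\) large compared to the velocities touched, with some divisibility constraints that are handled by case analysis modulo a small number. To cover every \(n\ge 95\) rather than only a progression, I would verify that the replacement pattern can be placed for every \(N\ge 96\), adjusting the position of the replaced block with \(N\).

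\textbf{Main obstacle.} The hardest step is making the replacement rule uniform in \(N\) and proving the covering claim rigorously, since a priori it involves all times \(t\). The approach I would try first is a locality argument. The uncovered set of times created by deleting a runner \(2a\) consists of neighbourhoods of fractions \(j/(2a)\) that are covered by no other old runner. By the tightness structure of \(1,\ldots,N\), such fractions are exactly those with denominator forced to be \(a\) or close to it. This reduces the covering check to a finite list of fraction families, parametrized linearly in \(N\). Each family can then be verified by an inequality that holds once \(N\ge 96\), with finitely many residue cases checked directly.
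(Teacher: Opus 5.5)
There is a genuine gap. Your frame is correct: start from \(2,4,\ldots,2N\) with \(N=n+1\), trade nine runners for eight, and read off \(E_n\ge1\) from a \(1/(n+2)\)-cover. But you never specify the replacement or prove that it still covers, and that is the entire content of the theorem. The nine runners, the eight new ones and their shifts are all left as ``suitable'', so no step is actually verified.

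The paper's rule, with \(\delta=1/(N+1)\), is:
\begin{itemize}
\item remove \(8,32,192\);
\item rephase \(12,24,48,96\) to shift \((1+\delta)/2\), and \(16,64\) to shift \(-\delta\);
\item add \(3\) and \(9\), each with shift \(1/4\).
\end{itemize}
The verification does not go through a global description of the uncovered set \(B\), as in your step~1. It rests on one local observation. Let \(q\) be the smallest velocity with \(\norm{qt}\le\delta\), suppose \(4\mid q\), and write \(qt=k+\delta z\) with \(|z|\le1\). Then \(k\) is odd, since otherwise the even runner \(q/2\) would be within \(\delta/2\) of the origin. With \(k\) odd there are three cases.
\begin{itemize}
\item For \(q\in\{8,16,32,64\}\), the runner \(3q/2\) at shift \((1+\delta)/2\) sits at \(\delta(3z+1)/2\), covering \(z\in[-1,1/3]\). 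The runner \(16\) or \(64\) at shift \(-\delta\) covers \(z\in[0,1]\).
\item For \(q\in\{24,48,96,192\}\), the runner \(q/2\) sits at \(\delta(1+z)/2\), covering every \(z\).
\item For \(q=12\), one of \(3t+\tfrac14\) and \(9t+\tfrac14\) is within \(3\delta/4\) of an integer, depending on \(k\bmod 4\).
\end{itemize}
The \(\delta\)-dependent offsets are essential here. With a shift of exactly \(1/2\), as you suggest, the runner \(3q/2\) only covers \(|z|\le2/3\), and times with \(z\in[-1,-2/3)\) are lost.

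Your explanation of the threshold is also mistaken. Nothing about gaps becoming ``separated'' for large \(N\), or residue-class case analysis, enters. The replaced block is neither placed near the top of the range nor moved with \(N\). Because the shifts are expressed through \(\delta\), the same rule at the same velocities yields a \(\delta\)-cover for every admissible \(N\). The only constraint is that every velocity it removes or rephases is present in \(2,4,\ldots,2N\), i.e.\ \(192\le 2N\). With \(N=n+1\) this is exactly \(n\ge95\), the case \(g=1\), \(L=192\) of Proposition~\ref{startingpoint}.
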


Since Blanco, Criado and Santos \cite[Proposition~5.13]{BlancoCriadoSantos2026} construct counterexamples for all \(5 \leq n \leq 17\), finding counterexamples for the remaining \(18 \leq n \leq 94\) would establish that SLR is false for all \(n \geq 5\), answering the first part of Zhang's Question~9.4 \cite{Zhang2026}.

\section{Uniform rules}
We now introduce our main tool in establishing the linear lower bound.

\begin{definition}
A \emph{uniform rule} consists of

\begin{itemize}
    \item \(R=\{r_i\}\), a finite set of even velocities to remove;
    \item \(S=\{(s_i,\alpha_i+\beta_i\delta)\},
\) a finite set of even velocities to rephase;
\item and
\(
A=\{(a_i,\gamma_i+\eta_i\delta)\},
\) a finite set of odd velocities to add.
\end{itemize}
It is required that, for every \(N\) and every odd \(b\) for which all
velocities \(r_i b\) and \(s_i b\) occur in the starting configuration \(2,4,\ldots,2N\),
removing the runners \(r_i b\), rephasing the runners \(s_i b\) by
\(\alpha_i+\beta_i\delta\), and adding the runners \(a_i b\) with shifts
\(\gamma_i+\eta_i\delta\) leaves a \(\delta\)-cover, for $\delta = 1/(N+1)$.

A uniform rule has {\em gain} $|R|-|A|$ and it is {\em runner-saving} if its gain is positive.
\end{definition}

Any application of a runner-saving uniform rule at an admissible scale \(b\)
produces a counterexample to SLR. In fact, a stronger statement is true and straightforward.

\begin{proposition}
\label{startingpoint}
Let \(g>0\) be the gain of a uniform rule, and let \(L\) be the largest
velocity removed or rephased by the rule. Then \(E_n\ge g\) for every
\(n\ge L/2-g\).
\end{proposition}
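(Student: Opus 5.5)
We apply the rule once at scale \(b=1\) inside a suitably long starting configuration, and then check that the resulting configuration has the right number of runners and distinct velocities. Fix \(n\ge L/2-g\) and set \(N=n+g\), so that \(N\ge L/2\), i.e.\ \(2N\ge L\). The velocities of the starting configuration \(2,4,\ldots,2N\) with zero shifts form a \(\delta\)-cover for \(\delta=1/(N+1)\), since \(\norm{2kt}\) for \(k=1,\dots,N\) is the configuration \(1,\dots,N\) evaluated at time \(2t\). Taking \(b=1\) (which is odd), every removed or rephased velocity \(r_i\), \(s_i\) is even and at most \(L\le 2N\), hence occurs in the starting configuration. So the hypothesis of the uniform rule applies, and after removing the runners \(r_i\), rephasing the \(s_i\), and adding the runners \(a_i\) with their prescribed shifts we again have a \(\delta\)-cover with \(\delta=1/(N+1)\).

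Next we count runners and check distinctness. The removed velocities are distinct, since \(R\) is a set, and they are taken from the \(N\) starting runners. The added velocities \(a_i\) are odd and distinct, so they cannot coincide with any surviving even velocity. Rephasing changes no velocity. We implicitly assume that rephased velocities are not also removed; if they were, we would simply drop those runners. The resulting configuration therefore has \(N-|R|+|A|=N-g=n\) runners with distinct positive integer velocities.

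It follows that for every \(t\) some runner lies within \(1/(N+1)=1/(n+g+1)\) of the origin. Hence the maximum over \(t\) of the minimum distance is at most \(1/(n+1+g)\). Taking the infimum over configurations gives \(1/(n+1+E_n)\le 1/(n+1+g)\), i.e.\ \(E_n\ge g\).

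The only delicate point is the admissibility bookkeeping: we need \(L\le 2N\) so that the rule applies, and \(N-g=n\) so that the runner count matches. Both follow from the choice \(N=n+g\) together with \(n\ge L/2-g\). Beyond this there is no real obstacle.
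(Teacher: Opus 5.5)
Your proposal is correct and follows the argument the paper intends. The paper calls the proposition straightforward and gives no separate proof, but the same bookkeeping appears in the proof of its main theorem: apply the rule once at scale \(b=1\) in the configuration \(2,4,\ldots,2N\) with \(N=n+g\), use \(L\le 2N\) for admissibility, and count \(N-|R|+|A|=n\) runners that form a cover at radius \(1/(n+1+g)\). Two of your side remarks are unnecessary: that the unmodified configuration is already a \(\delta\)-cover, and the hedge about overlaps between \(R\) and \(S\). The definition of a uniform rule directly supplies the \(\delta\)-cover, and it implicitly treats removed and rephased runners as distinct.
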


For our purposes, we need better control
over simultaneous applications of a uniform rule; this will be available for \emph{local} uniform
rules, defined below. In the starting unshifted configuration with velocities
\(2,4,\ldots,2N\), we say that the runner with velocity \(q\) is
\emph{responsible} for a time \(t\) if \(q\) is the smallest velocity satisfying
\(\|qt\|\le\delta\).

\begin{definition}
A uniform rule is called \emph{local} if, for every \(N\), every admissible
odd \(b\), and every time \(t\), whenever a runner removed or rephased by the
rule at scale \(b\) is responsible for \(t\), some runner rephased or added by
the same application of the rule is within distance \(\delta\) of the origin
at time \(t\).
\end{definition}

Runner-saving local uniform rules will give us positive $n$-linear lower bounds on $E_n$. For our main theorem, we need one more bit of notation. Let \(P\) be a positive integer, and list the positive integers coprime to
\(P\) as \(b_1<b_2<\cdots\). We write
\(\sigma^*(P):=\inf_{j\ge1}j/b_j\). This is the modified Schnirelmann
density of the sequence of positive integers coprime to \(P\), in the
terminology of Stalley~\cite{Stalley1955}. Equivalently, \(\sigma^*(P)\)
is the largest constant such that \(b_j\le j/\sigma^*(P)\) for every
\(j\ge1\).

\begin{theorem}
\label{main}
Suppose that there exists a local uniform rule of positive gain \(g\). Let
\(L\) be the largest even velocity removed or rephased by the rule, and let
\(P\) be the product of the distinct primes dividing the velocities occurring
in the rule. Then, for every \(n\ge1\),
\[
E_n\ge
g\left\lfloor
\frac{2\sigma^*(P)n}{L-2g\sigma^*(P)}
\right\rfloor.
\]
\end{theorem}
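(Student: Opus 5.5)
The plan is to apply the given local uniform rule simultaneously at $k$ different admissible odd scales $b_1<b_2<\cdots<b_k$, namely the first $k$ positive integers coprime to $P$, all inside one starting configuration $2,4,\ldots,2N$. Each application deletes $g$ more runners than it adds. The resulting configuration will therefore have $n=N-kg$ runners and, I will argue, still be a $\delta$-cover with $\delta=1/(N+1)$. That gives $1/(n+1+E_n)\le 1/(N+1)$, i.e.\ $E_n\ge N-n=kg$. It remains to choose $k$ as large as possible subject to admissibility.

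\textbf{Step 1: the applications do not interfere.} Every velocity $r_i,s_i,a_i$ in the rule has all its prime factors dividing $P$, while each scale $b_j$ is coprime to $P$. Suppose $r b=r'b'$, where $r,r'$ are rule velocities and $b,b'$ are scales. Comparing the $P$-parts and the parts coprime to $P$ gives $r=r'$ and $b=b'$. Hence the sets of runners removed or rephased at different scales are pairwise disjoint, so no runner is removed twice or rephased twice. By the same argument the added velocities $a_ib_j$ are pairwise distinct. They are odd, so they are also distinct from every surviving even velocity, and the final configuration has distinct positive integer velocities. Admissibility of every scale needs $L b_j\le 2N$, which reduces to $Lb_k\le 2N$.

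\textbf{Step 2: the covering property, which is the heart of the argument.} First, the unshifted configuration $2,4,\ldots,2N$ is itself a $\delta$-cover. Indeed, the velocities $1,\ldots,N$ form a $1/(N+1)$-cover, and we apply this at time $2t$. So every time $t$ has a responsible runner $q$.

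\begin{itemize}
\item If $q$ is untouched by all applications, it is still present with shift $0$ and covers $t$.
\item Otherwise $q$ is removed or rephased by the application at exactly one scale $b$. Locality then provides a runner rephased or added by that same application that lies within $\delta$ of the origin at time $t$.
\end{itemize}

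By Step 1, no other application deletes or re-rephases that runner, so it survives with exactly the shift the rule assigned it, and $t$ is covered. The point of locality is precisely this: we only ever invoke the rule's guarantee for the single application responsible for $t$, so the full cover property of the rule is never needed under simultaneous modification. This disjointness-plus-locality bookkeeping is the step I expect to be the main obstacle to state cleanly.

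\textbf{Step 3: counting.} Given $n$, set
\[
k=\left\lfloor \frac{2\sigma^*(P)n}{L-2g\sigma^*(P)}\right\rfloor
\quad\text{and}\quad
N=n+kg .
\]
If $k=0$ there is nothing to prove. Otherwise, the definition of $\sigma^*(P)$ gives $b_k\le k/\sigma^*(P)$, so admissibility holds once $Lk/\sigma^*(P)\le 2(n+kg)$. This is equivalent to $k\,(L-2g\sigma^*(P))\le 2\sigma^*(P)n$, which holds by the choice of $k$. Here the denominator $L-2g\sigma^*(P)$ must be positive. I would check this from $k\ge1$ being admissible at all, or by noting that otherwise the bound is read as vacuous.

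The modified configuration has $N-kg=n$ runners and is a $1/(N+1)$-cover. Hence the loneliness infimum satisfies $1/(n+1+E_n)\le 1/(N+1)$, which gives $E_n\ge kg$ as claimed.
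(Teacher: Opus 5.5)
Your proposal is correct and follows the paper's proof essentially step for step. It uses the same three ingredients: disjointness of the applications at different scales via unique factorization, locality invoked only for the single application containing the responsible runner, and the same choice of $k$ with $N=n+kg$ and $b_k\le k/\sigma^*(P)$. Your extra checks are valid details the paper leaves implicit: the added odd velocities are distinct, $2,4,\ldots,2N$ is itself a $\delta$-cover so a responsible runner always exists, and $k\ge 1$ forces $L-2g\sigma^*(P)>0$.
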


\begin{proof}
List the positive integers coprime to \(P\) as
\(b_1<b_2<\cdots\). By the definition of \(\sigma^*(P)\), we have
\(b_j\le j/\sigma^*(P)\) for every \(j\).

All $b_i$ are odd. Applications of the rule at distinct scales \(b_i\) and \(b_j\) involve
disjoint sets of velocities. Indeed, every prime dividing a velocity in the
rule divides \(P\), whereas \(b_i\) and \(b_j\) are coprime to \(P\), so
unique factorization determines both the scale and the unscaled velocity.

Locality implies that any finite collection of these applications may be
performed simultaneously. To see this, fix a time \(t\), and let \(q\) be
the runner responsible for \(t\) in the original configuration. If \(q\)
is unaffected, then she remains present and covers \(t\). Otherwise, \(q\)
is affected by a unique application of the rule, and locality provides a
runner from that same application who covers \(t\). This runner remains
present because the applications have disjoint velocity sets.

Given \(n\), put
\[
m=
\left\lfloor
\frac{2\sigma^*(P)n}{L-2g\sigma^*(P)}
\right\rfloor
\qquad\text{and}\qquad
N=n+gm.
\]
If \(m=0\), the result follows from \(E_n\ge0\), so assume \(m\ge1\). We have \(b_m\le m/\sigma^*(P)\), and 
\[
Lb_m\le\frac{Lm}{\sigma^*(P)}
\le 2n+2gm=2N.
\]
Thus the first \(m\) applications are all admissible in the starting
configuration with velocities \(2,4,\ldots,2N\).

Applying them simultaneously saves \(gm\) runners and leaves exactly
\(N-gm=n\) runners, while preserving coverage at radius \(1/(N+1)\).
Therefore \(E_n\ge N-n=gm\), which finishes the proof.
\end{proof}

\section{A runner-saving local uniform rule}
We now construct a runner-saving local uniform rule. In the notation of the previous section, set

    \[
    R=\{2^3,2^5,3\cdot2^6\};
    \]

    \[
    S=
    \left\{
    \left(3\cdot2^j,\frac{1+\delta}{2}\right):2\le j\le5
    \right\}
    \cup
    \left\{
    (2^4,-\delta),(2^6,-\delta)
    \right\};
    \]

    \[
    A=
    \left\{
    \left(3,\frac14\right),
    \left(3^2,\frac14\right)
    \right\}.
    \]

\begin{theorem}
    The assignment $(R,S,A)$ above is a local uniform rule of gain $1$.
\end{theorem}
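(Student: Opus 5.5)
The plan is to reduce everything to locality. The original configuration \(2,4,\ldots,2N\) at zero shifts is itself a \(\delta\)-cover for \(\delta=1/(N+1)\), so every time \(t\) has a responsible runner \(q\). If \(q\) is neither removed nor rephased, it survives unchanged and covers \(t\). If \(q\) is removed or rephased, locality supplies a rephased or added runner of the same application that covers \(t\). Hence locality at every admissible odd scale \(b\) already gives the \(\delta\)-cover property, so the triple \((R,S,A)\) is a uniform rule. Its gain is \(|R|-|A|=3-2=1\). So I only need to verify locality, and the shifts are visibly of the required form \(\alpha+\beta\delta\).

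To verify locality, fix \(N\), an admissible odd \(b\), and a time \(t\). Put \(y=bt\), so a scaled runner \(cb\) with shift \(\sigma\) sits at \(\sigma+cy\). Suppose the responsible runner is \(q=q_0b\) with \(q_0\in\{8,16,32,64,192,12,24,48,96\}\). Then \(\|q_0y\|\le\delta\), so I write \(y=k/q_0+e/q_0\) with \(|e|\le\delta\). Minimality of \(q\) gives \(\|cy\|>\delta\) for every even \(c<q_0\), using only the multiples \(cb\) of \(b\), which are smaller even velocities present in the configuration. These exclusions restrict the residue of \(k\). For instance, \(\|2y\|,\|4y\|>\delta\) forces \(k\) coprime to the relevant power of \(2\), and \(\|6y\|>\delta\) or \(\|18y\|>\delta\) excludes residues divisible by \(3\) when \(q_0=3\cdot 2^j\). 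For each surviving residue class I then exhibit a covering runner, typically splitting on the sign or size of \(e\).

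Model case \(q_0=8\). Here \(k\) is odd. The rephased runner \(12b\) sits at \(\tfrac{1+\delta}{2}+\tfrac{3k}{2}+\tfrac{3e}{2}\), where \(\tfrac{3k+1}{2}\) is an integer; it is therefore within \(\delta\) of the origin exactly when \(e\in[-\delta,\delta/3]\). For \(e\in[0,\delta]\), the runner \(16b\) sits at \(2k+2e-\delta\), which is within \(\delta\) of the origin. Together these cover \(e\in[-\delta,\delta]\).

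I expect the same pattern in the other cases. The \(-\delta\) shifts on \(16b\) and \(64b\) handle one side of each dyadic point. The \(\tfrac{1+\delta}{2}\) shifts on \(3\cdot2^j b\) handle the other side at the odd dyadic points, since \(3\cdot2^j\cdot(\text{odd}/2^{j+1})\) is a half-integer. The added runners \(3b,9b\) with shift \(\tfrac14\) cover the points \(y\equiv \pm\tfrac14 \pmod{\tfrac13}\) and \(\pmod{\tfrac19}\) that arise for \(q_0=192\) and the \(3\cdot2^j\) cases. For these slow runners the error term \(3e/q_0\) or \(9e/q_0\) is tiny compared with \(\delta\), so the check is essentially exact arithmetic in \(\mathbb Q/\Z\).

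The main obstacle is the bookkeeping for \(q_0=192\), and to a lesser extent \(64\) and \(96\). There \(k\) ranges over residues mod \(192\), and I must carefully use all exclusions \(\|cy\|>\delta\) for even \(c<192\): the powers of two, \(6,18,\ldots\), and crucially the rephased runners' original velocities, which were present in the original configuration. The aim is to cut the residues down to a handful that \(3b\) or \(9b\) (or the rephased \(48b,96b\), \(64b\)) provably cover. I would organize this by \(k\bmod 64\) and \(k\bmod 3\), and first try to show that surviving \(k\) satisfy \(k\equiv\pm 48\cdot(\text{unit})\) patterns making \(3y+\tfrac14\) or \(9y+\tfrac14\) an integer up to \(O(\delta/64)\). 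If that fails, I would enumerate the residue classes by a finite case table, since the conditions depend on \(\delta\) only linearly.
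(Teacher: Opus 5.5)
You have the reduction to locality, the derivation that \(k\) is odd, and the model case \(q_0=8\) right, and they match the paper. For \(k\) odd, the only exclusion ever needed is \(\norm{(q_0/2)y}>\delta\), since every \(q_0\) in play is divisible by \(4\).

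\textbf{The gap.} You verify only one of the nine cases. For \(q_0\in\{12,24,48,96,192\}\) your expectations point the wrong way, so locality is not established. Your planned first step for \(q_0=192\) cannot work. With \(y=(k+e)/192\) you get \(3y+\tfrac14=\tfrac{k+16+e}{64}\) and \(9y+\tfrac14=\tfrac{3k+16+3e}{64}\). For odd \(k\), the integers \(k+16\) and \(3k+16\) are odd, hence not divisible by \(64\). So \(3b\) and \(9b\) stay at distance at least \((1-3\delta)/64>\delta\) from the origin; recall \(\delta\le 1/97\), since \(192b\le 2N\). The points \(y\equiv\pm\tfrac14\pmod{\tfrac13}\) have even numerator over \(192\), so they never arise there. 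Your exclusions via \(\norm{6y}\) or \(\norm{18y}\) are also not the right ones (\(18>12\)), though none are needed beyond \(k\) odd.

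\textbf{The missing observation.} The runners \(3\cdot2^jb\), \(2\le j\le5\), rephased to \(\tfrac{1+\delta}{2}\), have a second role besides covering one side of the dyadic points. Each of them covers completely the times for which \(3\cdot2^{j+1}b\) is responsible. If \(q_0\in\{24,48,96,192\}\), the runner \((q_0/2)b\) sits at
\(\tfrac{1+\delta}{2}+\tfrac{k+e}{2}=\tfrac{k+1}{2}+\tfrac{\delta+e}{2}\).
Because \(k\) is odd, this is within \(\delta\) of an integer for every \(|e|\le\delta\). So \(q_0=192\) is a one-line case, and no bookkeeping modulo \(192\) is needed.

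\textbf{Where the added runners enter.} They are needed only for \(q_0=12\), which you do not treat. There \(3y+\tfrac14=\tfrac{k+1}{4}+\tfrac e4\) and \(9y+\tfrac14=\tfrac{3k+1}{4}+\tfrac{3e}{4}\). For odd \(k\), the first is within \(\delta\) of an integer when \(k\equiv3\pmod 4\), and the second when \(k\equiv1\pmod 4\).

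\textbf{The remaining cases.} The cases \(q_0=16,32,64\) go exactly as your model case. Use \((3q_0/2)b\) together with \(16b\) for \(q_0=16\), or with \(64b\) for \(q_0=32,64\).

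With these computations added the proof closes. As written, the proposal is a plan rather than a proof of locality.
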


\begin{proof}
Fix \(N\), an admissible odd \(b\), and a time \(t\). Let \(q\) be the runner responsible for \(t\), and write
\[
qt=k+\delta z,\qquad k\in\Z,\qquad |z|\le1.
\]
If the runner with speed \(q\) was not removed or rephased, she still covers \(t\). Otherwise \(q\) is one of the velocities removed or rephased by the rule at scale \(b\). Every such velocity is divisible by \(4\), so \(k\) is odd: otherwise dividing by \(2\) would give \((q/2)t=(k/2)+(\delta/2)z\) with \(q/2\) even and \(k/2\) an integer, so the runner with even velocity \(q/2\) would satisfy \(\norm{(q/2)t}\le\delta/2\), contradicting the choice of the smallest \(q\) in the assignment of responsibility.

We now consider three cases.

\emph{Case 1: \(q\in\{2^3b,2^4b,2^5b,2^6b\}\).}
Then \(3q/2\in\{3\cdot2^2b,3\cdot2^3b,3\cdot2^4b,3\cdot2^5b\}\), whose elements are velocities of runners rephased to \((1+\delta)/2\). At time \(t\) the runner with velocity \(3q/2\) is at position
\[
\frac{3qt}{2}+\frac{1+\delta}{2}
=\frac{3k}{2}+\frac{3\delta z}{2}+\frac{1+\delta}{2}
=\frac{3k+1}{2}+\frac{\delta(3z+1)}{2}.
\]
As \(k\) is odd, this is \(\delta(3z+1)/2\) modulo an integer, and \(\norm{\delta(3z+1)/2}\leq\delta\) when \(z\in[-1,1/3]\).

At the same time \(t\), the runner with velocity \(rq\), with \(r=2\) for \(q=2^3b,2^5b\) and \(r=1\) for \(q=2^4b,2^6b\), is also rephased by the rule, with velocity \(2^4b\) or \(2^6b\) and shift \(-\delta\). By a computation similar to the above, her position modulo an integer is \(\delta(rz-1)\), with \(\norm{\delta(rz-1)}\leq\delta\) for \(z\in[0,1]\). Together, \([-1,1/3]\) and \([0,1]\) cover all of \([-1,1]\).

\emph{Case 2: \(q\in\{3\cdot2^3b,3\cdot2^4b,3\cdot2^5b,3\cdot2^6b\}\).}
Then \(q/2\in\{3\cdot2^2b,3\cdot2^3b,3\cdot2^4b,3\cdot2^5b\}\), whose elements are velocities of runners rephased to \((1+\delta)/2\). By a computation similar to the above, at time \(t\) such a runner is at position \(\delta(1+z)/2\) modulo an integer, with \(\norm{\delta(1+z)/2}\leq\delta\) for any \(z\in[-1,1]\).

\emph{Case 3: \(q=3\cdot2^2b\).}
The runners with velocities \(q/4=3b\) and \(3q/4=3^2b\) are added with shift \(1/4\), so at time \(t\) they are at positions
\[
3bt+\frac14=\frac{k+1}{4}+\frac{\delta z}{4}
\qquad\text{and}\qquad
3^2bt+\frac14=\frac{3k+1}{4}+\frac{3\delta z}{4}.
\]
For odd \(k\), one of \((k+1)/4\) and \((3k+1)/4\) is an integer: the first when \(k\equiv3\pmod4\), the second when \(k\equiv1\pmod4\). The corresponding runner is within \(3\delta/4\) of an integer.

Thus, whenever a runner removed or rephased by the rule is responsible for \(t\), a runner rephased or added by the rule covers \(t\). Hence the rule is local and the resulting configuration remains a \(\delta\)-cover; as 3 runners are removed and 2 are added, we have a runner-saving local uniform rule.
\end{proof}

For this rule, the gain is \(g=1\), the largest removed or rephased velocity is \(L=192\), and the primes occurring in the rule are \(2\) and \(3\), so \(P=6\). The positive integers coprime to \(6\) satisfy \(b_j\le3j\), and \(j/b_j\) tends to \(1/3\); hence \(\sigma^*(6)=1/3\). Theorem~\ref{main} now gives
\[
E_n\ge
\left\lfloor
\frac{2n/3}{192-2/3}
\right\rfloor
=
\left\lfloor\frac{n}{287}\right\rfloor.
\]
Proposition~\ref{startingpoint} gives \(E_n\ge1\) whenever \(n\ge192/2-1=95\). This proves Theorems~1 and~2.

\section{Discussion}

The elementary upper bound $E_n\le n-1$, together with our construction,
shows that $E_n$ grows linearly in $n$. We make no optimality claim for
the coefficient $1/287$ in our lower bound. In fact, Table \ref{tab:additional-local-uniform-rules} lists some additional local uniform rules, with the biggest constant $1/224$. We decided not to sacrifice brevity for an insignificant constant improvement, since the small $n$ values appearing in \cite{BlancoCriadoSantos2026} give the impression of much quicker growth. We expect that lower bounds of the form $E_n\ge Cn-o(n)$ exist with $C$ much closer to $1$, although for significant improvement one probably needs techniques beyond our Theorem \ref{main}. We actually know nothing that would exclude even the possibility that the leading constant is \(1\). Indeed, excluding this possibility would solve the analogous problem for the ordinary LR, where it is notoriously difficult: see for example \cite[Theorem~1.2]{Tao2018} and \cite[Theorem~1.3]{Bedert2025} for sublinear improvements that were nevertheless harder to obtain than the result in this note.

\begin{table}[htbp]
\centering
\normalsize
\setlength{\tabcolsep}{4pt}
\renewcommand{\arraystretch}{1.18}
\caption{Some more local uniform rules, in shorthand notation \([X]_{\theta}:=\{(x,\theta):x\in X\} \).}

\label{tab:additional-local-uniform-rules}

\begin{tabular*}{\linewidth}{@{\extracolsep{\fill}}|c|l|c|c|c|c|c|}
\hline
Replacement
& \multicolumn{1}{c|}{Rule}
& \(g\)
& \(L\)
& \(P\)
& \(\sigma^*(P)\)
& \(C\)
\\[3pt]
\hline

\(19\to17\)
&
\begin{tabular}[t]{@{}r@{\;}l@{}}
\(R={}\) & \(\{8,32,36,40,72,180,192,240\},\)\\
\(S={}\) & \([\{12,24,48,60,96,108,120\}]_{\frac{1+\delta}{2}}\)\\
         & \({}\cup[\{16,64,80\}]_{-\delta}
             \cup[\{216\}]_{-2\delta},\)\\
\(A={}\) & \([\{3,9,27\}]_{\frac{1+\delta}{4}}\)\\
         & \({}\cup[\{15,45,135\}]_{\frac{3+\delta}{4}}.\)
\end{tabular}
& \(2\)
& \(240\)
& \(30\)
& \(\frac4{15}\)
& \(\frac1{224}\)
\\[3pt]
\hline

\(24\to21\)
&
\begin{tabular}[t]{@{}r@{\;}l@{}}
\(R={}\) & \(\{4,16,20,36,40,64,72,240,384,432\},\)\\
\(S={}\) & \([\{12,24,48,96,192\}]_{\frac{1+\delta}{2}}\)\\
         & \({}\cup[\{60,108,120,216\}]_{\frac{1-\delta}{2}}\)\\
         & \({}\cup[\{8,128\}]_{-\delta}
             \cup[\{80,144\}]_{\delta}\)\\
         & \({}\cup[\{32\}]_{-\frac{3\delta}{2}},\)\\
\(A={}\) & \([\{3,45,81\}]_{\frac{1-\delta}{4}}
             \cup[\{5,9\}]_{\frac{1+\delta}{4}}\)\\
         & \({}\cup[\{15,27\}]_{\frac14}.\)
\end{tabular}
& \(3\)
& \(432\)
& \(30\)
& \(\frac4{15}\)
& \(\frac1{269}\)
\\[3pt]
\hline

\(9\to8\)
&
\begin{tabular}[t]{@{}r@{\;}l@{}}
\(R={}\) & \(\{8,32,192\},\)\\
\(S={}\) & \([\{12,24\}]_{\frac{1+\delta}{2}}
             \cup[\{48,96\}]_{\frac{1-\delta}{2}}\)\\
         & \({}\cup[\{16\}]_{-\delta}
             \cup[\{64\}]_{\delta},\)\\
\(A={}\) & \([\{3,9\}]_{\frac14}.\)
\end{tabular}
& \(1\)
& \(192\)
& \(6\)
& \(\frac13\)
& \(\frac1{287}\)
\\[3pt]
\hline

\(16\to14\)
&
\begin{tabular}[t]{@{}r@{\;}l@{}}
\(R={}\) & \(\{16,40,64,240,384\},\)\\
\(S={}\) & \([\{24,48,60,96,120,192\}]_{\frac{1-\delta}{2}}\)\\
         & \({}\cup[\{36\}]_{\frac{1+\delta}{2}}
             \cup[\{32\}]_{\frac{3\delta}{2}}\)\\
         & \({}\cup[\{72\}]_{-2\delta}
             \cup[\{80,128\}]_{\delta},\)\\
\(A={}\) & \([\{15,27,45\}]_{\frac{1+\delta}{4}}.\)
\end{tabular}
& \(2\)
& \(384\)
& \(30\)
& \(\frac4{15}\)
& \(\frac1{359}\)
\\[3pt]
\hline

\(15\to14\)
&
\begin{tabular}[t]{@{}r@{\;}l@{}}
\(R={}\) & \(\{8,36,40,48,72,180,240\},\)\\
\(S={}\) & \([\{12,24,60,108,120\}]_{\frac{1+\delta}{2}}\)\\
         & \({}\cup[\{16,80\}]_{-\delta}
             \cup[\{216\}]_{-2\delta},\)\\
\(A={}\) & \([\{3,9,27\}]_{\frac{1+\delta}{4}}\)\\
         & \({}\cup[\{15,45,135\}]_{\frac{3+\delta}{4}}.\)
\end{tabular}
& \(1\)
& \(240\)
& \(30\)
& \(\frac4{15}\)
& \(\frac1{449}\)
\\[3pt]
\hline

\end{tabular*}
\end{table}

\bibliographystyle{plain}
\bibliography{lonely_and_sad}

\end{document}